\newtheorem{theorem}{Theorem}[section]
\newtheorem{lemma}[theorem]{Lemma}
\newtheorem{definition}{Definition}
\newtheorem{conjecture}[theorem]{Conjecture}
\newcommand{\Poi}{\textit{Poi}}
\newcommand{\Bin}{\textit{Bin}}
\newcommand{\Prb}{\mathbb{P}}
\newcommand{\Exp}{\mathbb{E}}
\title{On the monotonicity of tail probabilities}
\author{
Robbert Fokkink\thanks{Institute of Applied Mathematics, 
Delft University of Technology, 
Mourikbroekmanweg 6
2628 XE Delft, The Netherlands,
e-mail: r.j.fokkink@tudelft.nl} 
\and
Symeon Papavassiliou\thanks{School of Electrical and Computer Engineering, 
National Technical University of Athens, 
Zografou, Greece, 15780, e-mail: papavass@mail.ntua.gr}
\and
Christos Pelekis\thanks{School of Electrical and Computer Engineering, 
National Technical University of Athens, 
Zografou, Greece, 15780, e-mail: pelekis.chr@gmail.com}
}
\begin{document}
	\maketitle
	
\begin{abstract}  
Let $S$ and $X$ be independent random 
variables, assuming values in the set of non-negative integers, and suppose further 
that both $\Exp(S)$ and $\Exp(X)$ are integers satisfying $\Exp(S)\ge \Exp(X)$.  
We establish a sufficient condition for the tail  probability $\Prb(S\ge \Exp(S))$ 
to be larger than $\Prb(S+X\ge \Exp(S+X))$. We  also apply this result to sums of independent binomial and Poisson random variables.  
\end{abstract}

\noindent{
\emph{Keywords}: tail comparisons; sums of independent random variables; 
binomial distribution; Poisson distribution; Simmons' inequality  
}
	
\noindent{
\emph{MSC (2010)}:  60G50; 60E15
}

\section{Main result}  

Here and later, given a positive integer $m\ge 1$, we denote by $[m]$ the set $\{1,\ldots,m\}$ and by 
$\Bin(m,p)$ a binomial random variable of parameters $m$ and $p\in (0,1)$. 
Moreover,  $\Poi(\lambda)$ denotes a 
Poisson random variable of mean $\lambda$. The notation $Y\sim Z$ indicates that the random variables $Y,Z$ have the same distribution.

Let $\{X_i\}_{i\ge 1}$ be a sequence of \emph{independent} random variables, 
assuming values in the set of non-negative integers, and 
for $k\ge 1$, let $S_k$ 
denote the  sum 
$S_k = X_1+ \cdots + X_k$. 
We shall be concerned with  monotonicity properties for tail probabilities of the form $\{\Prb(S_k \ge \Exp(S_k))\}_{k\ge 1}$. 

There is a considerable amount of literature 
on tail monotonicity of particular distributions. 
It is shown in~\cite{Chaundy_Bullard} that for every fixed positive integer $n\ge 1$, it holds 
\begin{equation}\label{CB_ineq}
\Prb\left(\Bin(nk, \frac{1}{n})\ge k\right) \ge 
\Prb\left(\Bin(n(k+1), \frac{1}{n})\ge k+1\right), \, \text{ for all  } \, k\ge 1 \, .
\end{equation}

An inequality similar to~\eqref{CB_ineq} is also valid for Poisson random variables.  
It is shown in~\cite{Teicher} that 
\begin{equation}\label{Teicher_ineq}
\Prb(\Poi(k)\ge k) \ge \Prb(\Poi(k+1)\ge k+1),  \, \text{ for all  } \, k\ge 1 \, .
\end{equation}
A slightly different proof of~\eqref{Teicher_ineq} can be found in~\cite[Lemma~1]{Adell}.  
Further inequalities can be found in~\cite{Anderson_Samuels, Kane, Pinelis}, among others.

Let us remark that  inequalities~\eqref{CB_ineq} and~\eqref{Teicher_ineq} 
concern the monotonicity of tail probabilities of the form 
$\Prb(S_k \ge \Exp(S_k))$, where $S_k$ is a sum of  independent random whose means are  equal to $1$. Both inequalities~\eqref{CB_ineq} and~\eqref{Teicher_ineq} have been extended to the case of integer means (see~\cite[Theorem~2.1]{Jogdeo_Samuels} and~\cite[Theorem~2.3]{Kane}).

This note aims at demonstrating that some special cases as well as some extensions of comparisons between tail probabilities for sums of independent random variables, whose means are all integers,  can be derived from a 
comparison between 
$\Prb(S+X\ge s+\Exp(X))$ and $\Prb(S\ge s)$, where 
$S$ and $X$ are independent 
random variables 
satisfying certain 
``skewness" conditions, which we now describe formally.  

\begin{definition}[Right-skewness]
\label{Rskew}
Let $S$ take values in the set of non-negative integers. Assume further that $S$ is unimodal with mode $s$. The $S$ is called \emph{right-skewed} if   
\[
\Prb(S = s -i) \le \Prb(S = s + i-1)\, ,
\text{ for all } i\in [s] \, . 
\]
\end{definition}
 
\begin{definition}[Left-loadedness]
\label{Scond} 
Let $X$ be a random variable, taking values in the set of non-negative integers, and suppose  further that $m:=\Exp(X)$ is an \emph{integer}. For $i\in [m]$, set 
$\alpha_i := \Prb(X\le m-i)-\Prb(X\ge m+i)$. 
Then $X$ is said to be \emph{left-loaded} if either of the following two conditions holds true:
\begin{enumerate} 
\item \textbf{Condition $(L_1)$}: The sequence $\{\alpha_i\}_{i=1}^m$ changes sign once, i.e., there exists $\ell\in [m]$ such that $\alpha_i\ge 0$, for $i\le \ell$, and $\alpha_i \le 0$, for $i>\ell$.  
\item \textbf{Condition $(L_2)$}: It holds $\sum_{i=1}^{k}\alpha_i \ge 0$, for all $k\in [m]$. 
\end{enumerate}
\end{definition}

Our main result   
reads as follows.

\begin{theorem}\label{gen_thm}
Let $s\ge m$ be two positive integers. Suppose that $S$ and $X$  are  \emph{independent} random variables, assuming values in the set of
non-negative integers, that satisfy 
the following conditions:
\begin{enumerate}
    \item $S$ is right-skewed with mode $s$.  
    \item $X$ is left-loaded with mean $m$. 
\end{enumerate}
Then $\Prb(S \ge s) \ge \Prb(S+X \ge s+ m)\,$.
\end{theorem}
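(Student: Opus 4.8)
The plan is to condition on $X$ and reduce the whole difference to a single weighted sum of the sequence $\{\alpha_i\}$, which can then be controlled by an Abel summation. Write $q_u := \Prb(S=u)$ and $p_j := \Prb(X=j)$, with the convention $q_u := 0$ for $u<0$, and set $L_i := \Prb(X\le m-i)$ and $R_i := \Prb(X\ge m+i)$, so that $\alpha_i = L_i - R_i$ for $i\in[m]$ and $L_i = 0$ for $i>m$. By independence,
\[
\Delta := \Prb(S \ge s) - \Prb(S+X \ge s+m) = \sum_{j \ge 0} p_j\big[\Prb(S\ge s) - \Prb(S \ge s+m-j)\big].
\]
Splitting the bracket according to the sign of $m-j$ (it telescopes into a sum of the $q_u$ lying between $s+m-j$ and $s$) and interchanging the order of summation, I expect to reach the clean identity
\[
\Delta = \sum_{i \ge 1} q_{s+i-1}\, L_i - \sum_{i \ge 1} q_{s-i}\, R_i .
\]
This step is pure bookkeeping, justified by Tonelli since $\Exp(X)=m<\infty$ forces absolute convergence.

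Next I would bring in right-skewness. Since $q_{s+i-1}\ge q_{s-i}$ for $i\in[s]$, writing $\delta_i := q_{s+i-1}-q_{s-i}\ge 0$ gives
\[
\Delta = \sum_{i\ge 1} q_{s-i}(L_i - R_i) + \sum_{i\ge 1}\delta_i L_i = \sum_{i=1}^m q_{s-i}\,\alpha_i - \sum_{i>m} q_{s-i}\, R_i + \sum_{i\ge 1}\delta_i L_i ,
\]
using $L_i=0$ for $i>m$. The last sum is nonnegative, so the whole difficulty is the middle term: the overhanging negative tail $\sum_{i>m} q_{s-i} R_i$ has no $L_i$-counterpart and is precisely the obstruction created by the asymmetry $s\ge m$. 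I would tame it in two moves. First, $\Exp(X)=m$ is equivalent (by the layer-cake identity $\sum_{i\ge1}L_i=\sum_{i\ge1}R_i$) to $\sum_{i>m}R_i=\sum_{i=1}^m\alpha_i$, which is in particular nonnegative. Second, unimodality of $S$ with mode $s$ makes $q$ non-decreasing below $s$, so $q_{s-i}\le q_{s-m-1}$ for every $i>m$. Combining these,
\[
\Delta \ge \sum_{i=1}^m \big(q_{s-i}-q_{s-m-1}\big)\alpha_i + \sum_{i\ge 1}\delta_i L_i \ge \sum_{i=1}^m c_i\,\alpha_i,\qquad c_i := q_{s-i}-q_{s-m-1}.
\]

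It then remains to prove $\sum_{i=1}^m c_i\alpha_i\ge 0$. Unimodality again shows the coefficients are nonnegative and non-increasing, $c_1\ge\cdots\ge c_m\ge 0$, so Abel summation yields
\[
\sum_{i=1}^m c_i\,\alpha_i = c_m\sum_{i=1}^m\alpha_i + \sum_{k=1}^{m-1}(c_k-c_{k+1})\sum_{i=1}^k\alpha_i .
\]
Under Condition $(L_2)$ every partial sum $\sum_{i=1}^k\alpha_i$ is nonnegative and every coefficient is nonnegative, so the right-hand side is $\ge 0$, which closes the argument. To cover Condition $(L_1)$ I would first note that it implies $(L_2)$: if $\{\alpha_i\}$ changes sign only once at $\ell$, then for $k\le\ell$ the partial sum is a sum of nonnegative terms, while for $k>\ell$ one has $\sum_{i=1}^k\alpha_i\ge\sum_{i=1}^m\alpha_i\ge 0$, since the discarded $\alpha_{k+1},\dots,\alpha_m$ are nonpositive and the full sum is nonnegative by the mean identity above. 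The main obstacle throughout is this overhanging tail $\sum_{i>m}q_{s-i}R_i$: everything hinges on absorbing it by pairing the mean identity $\sum_{i>m}R_i=\sum_{i=1}^m\alpha_i$ against the unimodal bound on the lower tail of $S$, after which the sign structure of $\{\alpha_i\}$ does the rest.
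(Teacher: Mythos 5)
Your proof is correct and follows essentially the same route as the paper's: your identity for $\Delta$ is precisely the paper's key inequality~\eqref{enough:01} (obtained there by conditioning on $S$ rather than on $X$), your treatment of the overhanging tail $\sum_{i>m} q_{s-i}R_i$ combines unimodality with the mean identity exactly as in Lemma~\ref{lem:1} and the bound~\eqref{ar:101}, and your final reduction to $\sum_{i=1}^m c_i\alpha_i \ge 0$ with nonnegative non-increasing coefficients is the mirror image of the paper's~\eqref{positive}, with your coefficients $c_i = q_{s-i}-q_{s-m-1}$ built from the lower tail of $S$ where the paper's $\Delta_i = \Prb(S=s+i-1)-\Prb(S=s+m)$ come from the upper tail. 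The one genuine streamlining is your observation that Condition $(L_1)$ together with $\sum_{i=1}^m \alpha_i \ge 0$ implies Condition $(L_2)$, so a single Abel summation covers both cases, whereas the paper handles $(L_1)$ by a separate direct argument.
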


We prove Theorem~\ref{gen_thm} in Section~\ref{sec:2}. In Section~\ref{sec:4} we apply Theorem~\ref{gen_thm} to sums of independent Poisson random variables, 
and deduce a refinement of a result due to 
Kane~\cite[Theorem~2.3]{Kane}. 
Our paper ends with Section~\ref{sec:last}, where 
we briefly sketch how Theorem~\ref{gen_thm} applies to sums of independent binomial random variables, and state  two  conjectures.

\section{Proof of Theorem~\ref{gen_thm}}\label{sec:2}

We begin with an observation.

\begin{lemma}\label{lem:1}
Let $X$ be random variable, assuming non-negative integer values, such that  $m:=\Exp(X)$ is an \emph{integer}. Then 
\[
\sum_{i=1}^{m} \big(\Prb(X\le m-i) - \Prb(X\ge m+i)\big) = \sum_{i\ge m+1} \Prb(X\ge m+i) \, .
\]
In particular,  $\sum_{i=1}^{m} \big(\Prb(X\le m-i) - \Prb(X\ge m+i)\big) \ge 0$.
\end{lemma}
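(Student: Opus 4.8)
The plan is to reduce everything to the tail-sum (layer-cake) representation of the expectation, combined with the pointwise identity $X - m = (X-m)^+ - (m-X)^+$. First I would record the two tail-sum formulas for the non-negative integer-valued variables $(X-m)^+$ and $(m-X)^+$, namely
\[
\Exp\big((X-m)^+\big) = \sum_{k\ge 1} \Prb(X \ge m+k), \qquad \Exp\big((m-X)^+\big) = \sum_{k\ge 1} \Prb(X \le m-k),
\]
each of which is immediate from the identity $\Exp(Y) = \sum_{k\ge 1}\Prb(Y\ge k)$ valid for any non-negative integer-valued $Y$.

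Next I would take expectations in $X - m = (X-m)^+ - (m-X)^+$ and invoke the hypothesis $\Exp(X) = m$, which forces $\Exp((X-m)^+) = \Exp((m-X)^+)$, that is,
\[
\sum_{k\ge 1} \Prb(X \le m-k) = \sum_{k\ge 1} \Prb(X \ge m+k).
\]
Since $X$ takes non-negative values, $\Prb(X\le m-k)=0$ whenever $k>m$, so the left-hand sum is finite and equals $\sum_{k=1}^{m}\Prb(X\le m-k)$; this truncation is the one place where non-negativity of $X$ is used. On the right I would split the range into $1\le k\le m$ and $k\ge m+1$ and subtract $\sum_{k=1}^{m}\Prb(X\ge m+k)$ from both sides, leaving exactly
\[
\sum_{i=1}^{m}\big(\Prb(X\le m-i)-\Prb(X\ge m+i)\big) = \sum_{i\ge m+1}\Prb(X\ge m+i),
\]
which is the claimed identity. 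The ``in particular'' statement then follows at once, since the right-hand side is a sum of probabilities and hence non-negative.

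The computation is short, and I do not anticipate a genuine obstacle; the only points needing care are the finiteness of the tail sums (guaranteed by $\Exp(X)=m<\infty$) and the truncation $\Prb(X\le m-k)=0$ for $k>m$. As an alternative route that avoids the $(\cdot)^+$ formalism, one could compute $\sum_{i=1}^{m}\Prb(X\le m-i)$ and $\sum_{i=1}^{m}\Prb(X\ge m+i)$ directly by interchanging the order of summation and rewriting both quantities in terms of the point masses $\Prb(X=j)$; this reproduces the same cancellation and again pins the difference down to the far tail $\sum_{i\ge m+1}\Prb(X\ge m+i)$.
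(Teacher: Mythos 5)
Your proof is correct, and it takes a genuinely different (if closely related) route from the paper's. The paper applies the tail-sum formula directly to $X$ itself, writing $m=\Exp(X)=\sum_{i\ge 1}\Prb(X\ge i)$, splitting the range of summation into the three blocks $[1,m]$, $[m+1,2m]$ and $[2m+1,\infty)$, and converting the first block via $1-\Prb(X\ge i)=\Prb(X\le i-1)$; the whole argument is a one-line rearrangement of that single identity. You instead center at the mean first, using the pointwise decomposition $X-m=(X-m)^+-(m-X)^+$ to deduce $\Exp\big((X-m)^+\big)=\Exp\big((m-X)^+\big)$ from $\Exp(X)=m$, and only then apply the tail-sum formula to each part separately. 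Both arguments pivot on the same layer-cake identity and produce the same cancellation, but the decompositions differ, and each placement has its own virtue: your version makes the structural reason for the identity visible (equality of the upper and lower mean deviations $\sum_{k\ge 1}\Prb(X\ge m+k)=\sum_{k\ge 1}\Prb(X\le m-k)$) and cleanly isolates the single place where non-negativity of $X$ is used, namely the truncation $\Prb(X\le m-k)=0$ for $k>m$; in the paper's proof, by contrast, non-negativity enters at the very start, in the validity of $m=\sum_{i\ge 1}\Prb(X\ge i)$. Your care about finiteness is appropriate though automatic here, since $(m-X)^+\le m$ is bounded and $(X-m)^+\le X$ is integrable.
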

\begin{proof}
Notice that  
\[
m =   \sum_{i=1}^m\Prb(X\ge i) + \sum_{i=m+1}^{2m}\Prb(X\ge i) + \sum_{i\ge 2m+1}\Prb(X\ge i)  \, ,
\]
which, upon transferring the first two sums on the right to the other side, is equivalent to 
\[
\sum_{i=1}^{m} \left(\Prb(X\le m-i) - \Prb(X\ge m+i)\right) = \sum_{i\ge m+1} \Prb(X\ge m+i) \, . 
\]
The result follows. 
\end{proof}

\begin{proof}[Proof of Theorem~\ref{gen_thm}]
If we condition on $S$ we have     
\begin{eqnarray*}
\Prb(S+X\ge s+m) &=& \sum_{i\ge 0} \Prb(X\ge s+m-i) \cdot \Prb(S =i) \\ 
&=&\Prb(S \ge s+m) + \sum_{i=0}^{s+m -1}\Prb(X\ge s+m-i) \cdot\Prb(S=i)
\end{eqnarray*}
Hence $\Prb(S+X\ge s+m )\le \Prb(S\ge s)$ will follow from
\begin{equation*}
\sum_{i=0}^{s+m-1} \Prb(X\ge s+m-i)\cdot\Prb(S =i) \,\le\, \sum_{i=1}^{m}\Prb( S=  s+i -1) \, ,
\end{equation*}
which is equivalent to
\begin{equation}\label{enough:01}
\sum_{i= 1}^{s} \Prb(S = s -i)\cdot \Prb(X\ge m+i)\, \le\, \sum_{i=1}^{m} \Prb(S=s +i-1) \cdot \Prb(X \le m -i)  \, .
\end{equation} 
Let $L$ and $R$ denote the left hand side 
and the right hand side of~\eqref{enough:01}.
Since $S$ is unimodal with mode equal to $s$, and $s\ge m$, we 
can estimate $L$ as follows:
\begin{equation*}
L 
\,\le\, \underbrace{\sum_{i= 1}^{m } \Prb(S = s -i)\cdot \Prb(X \ge m +i)}_\textrm{$L_1$}
\, + \,  
\underbrace{\Prb(S = s - m -1) \cdot \sum_{i=m +1}^{s}\Prb(X \ge m+i)}_\textrm{$L_2$} \, ,
\end{equation*}
with the convention that $L_2=0$ when $s=m$.
Now, since $S$ is right-skewed, we have    
\begin{equation}\label{ar:111}
    L_1 \le \sum_{i= 1}^{m} \Prb(S = s + i-1)\cdot \Prb(X\ge m+i) =: R_1 \, .
\end{equation}
Using again the right-skewness of $S$ and Lemma~\ref{lem:1}, we have   
\begin{equation}\label{ar:101}
L_2\le \Prb(S = s+m ) \cdot \left(
\sum_{i=1}^{m}(\Prb(X\le m -i) - \Prb(X\ge m+ i))\right) =: R_2 \, .
\end{equation}
It follows from~\eqref{enough:01},~\eqref{ar:111} and~\eqref{ar:101} that it is enough 
to show that $R_1 + R_2 \le R$ or, equivalently,  
\begin{equation}\label{eq:final}
\sum_{i=1}^m \big(\Prb(S=s+i-1) - \Prb(S=s+m) \big) \cdot\big( \Prb(X\le m-i)  -  \Prb(X\ge m+i) \big)\, \ge\, 0 \, ,
\end{equation}
For $i\in [m]$, let 
$\Delta_i := \Prb(S=s+i-1) - \Prb(S=s+m)$
and $\alpha_i := \Prb(X\le m-i)-\Prb(X\ge m+i)$, and note that~\eqref{eq:final} is equivalent to 
\begin{equation}\label{positive}
    \sum_{i=1}^{m} \Delta_i \cdot \alpha_i \ge 0 \, . 
\end{equation}
The 
unimodality of $S$ implies that $\Delta_1\ge\cdots\ge \Delta_m\ge 0$. 
We distinguish two cases.

Suppose first that $X$ satisfies   Condition $(L_1)$. 
Let $\ell\in [m]$ be such that $\alpha_i\ge 0$, for $i\le \ell$, and $\alpha_i \le 0$, for $i>\ell$.
Then, since $\{\Delta_i\}_{i\in [m]}$ is non-increasing, it holds 
\[
\sum_{i=1}^{m} \Delta_i \cdot \alpha_i \ge \Delta_{\ell}\sum_{i=1}^{\ell} \alpha_i + 
\Delta_{\ell} \sum_{i=\ell+1}^{m} \alpha_i = \Delta_{\ell} \sum_{i\in [m]} \alpha_i\ge 0 \, ,
\]
where the last estimate follows from the 
second statement in Lemma~\ref{lem:1}. 
Hence~\eqref{positive} holds. 

Now assume that $X$ satisfies   Condition $(L_2)$.
Set $\Sigma_i := \sum_{j=1}^{i}\alpha_j$, for $i\in [m]$, and notice that it holds $\Sigma_i \ge 0$, by assumption. 
Using summation by parts, we have  
\[
\sum_{i=1}^{m} \Delta_i\cdot \alpha_i = 
\Delta_m \cdot \Sigma_m + 
\sum_{i=1}^{m-1} (\Delta_{i}-\Delta_{i+1})\cdot \Sigma_i  \ge 0 \, .
\]
Hence~\eqref{positive} holds, and the result follows. 
\end{proof}

\section{Poisson random variables}\label{sec:4}

In this section we apply Theorem~\ref{gen_thm} to sums of independent Poisson random variables. We obtain the following. 

\begin{theorem}\label{Kane_thm2}
Let $\{\lambda_i\}_{i\ge 1}$ be a sequence of positive integers satisfying $\sum_{i=1}^k\lambda_i \ge \lambda_{k+1}$, for all $k\ge 1$, and let $\{X_i\}_{i\ge 1}$ be 
independent random variables such that 
$X_i\sim \Poi(\lambda_i)$. Then it holds 
\[
\Prb\left( \sum_{i=1}^kX_i\ge \sum_{i=1}^{k} \lambda_i \right) \ge \Prb\left( \sum_{i=1}^{k+1}X_i\ge\sum_{i=1}^{k+1} \lambda_i \right), \, \text{ for } \, k\ge 1 \, .
\]
\end{theorem}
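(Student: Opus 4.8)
The plan is to apply Theorem~\ref{gen_thm} with the natural decomposition $S = \sum_{i=1}^k X_i \sim \Poi(\Lambda_k)$, where $\Lambda_k := \sum_{i=1}^k \lambda_i$, and $X = X_{k+1} \sim \Poi(\lambda_{k+1})$. By the infinite divisibility (additivity) of the Poisson distribution, $S$ is itself a Poisson variable with integer mean $s := \Lambda_k$, and $S$ and $X$ are independent. The hypothesis $\sum_{i=1}^k \lambda_i \ge \lambda_{k+1}$ is precisely the condition $s \ge m$ with $m := \lambda_{k+1}$, so the arithmetic constraint of Theorem~\ref{gen_thm} is met. It then remains to verify the two structural hypotheses: that $S \sim \Poi(\Lambda_k)$ is right-skewed with mode $\Lambda_k$, and that $X \sim \Poi(\lambda_{k+1})$ is left-loaded.

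\emph{Right-skewness of $S$.} First I would recall that a Poisson$(\lambda)$ variable with integer mean $\lambda$ is unimodal with mode exactly $\lambda$ (the ratio $\Prb(S=j+1)/\Prb(S=j) = \lambda/(j+1)$ crosses $1$ at $j = \lambda-1$, giving equal modes at $\lambda-1$ and $\lambda$, so $\lambda$ is a mode). To establish right-skewness I must check $\Prb(S = s-i) \le \Prb(S = s+i-1)$ for all $i \in [s]$, where $s = \Lambda_k$. Writing out the Poisson mass function, this inequality reduces, after cancelling the common factor $e^{-s} s^{s}$, to comparing $\frac{s^{-i}}{(s-i)!}$ against $\frac{s^{i-1}}{(s+i-1)!}$, i.e.\ to showing $(s+i-1)! \le s^{2i-1}(s-i)!$. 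This is a product comparison: the left side is a product of $2i-1$ consecutive integers centered near $s$, and I expect it to follow from pairing the factors $(s+j)$ and $(s-j)$ and using $(s+j)(s-j) = s^2 - j^2 \le s^2$. Making this pairing precise and handling the leftover central factor is a routine but slightly delicate bookkeeping step.

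\emph{Left-loadedness of $X$.} Next I would verify that $\Poi(\lambda_{k+1})$ with integer mean $m = \lambda_{k+1}$ is left-loaded, ideally by checking Condition $(L_1)$, that the sequence $\alpha_i = \Prb(X \le m-i) - \Prb(X \ge m+i)$ changes sign at most once. A clean way to see a single sign change is to examine the increments $\alpha_i - \alpha_{i+1} = \Prb(X = m-i) - \Prb(X = m+i+1)$ (using $\alpha_{i+1}-\alpha_i = -\Prb(X=m-i) + \Prb(X=m+i+1)$ after telescoping the tail differences), and argue that the symmetric-about-the-mode comparison of individual Poisson masses $\Prb(X=m-i)$ versus $\Prb(X=m+i+1)$ is itself monotone in $i$, forcing $\{\alpha_i\}$ to be unimodal-then-monotone in a way that yields one sign change. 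This mirrors the right-skewness computation applied to $X$ itself; indeed right-skewness of $X$ would give $\alpha_i \ge 0$ outright, but since we only know $m = \lambda_{k+1}$ is an integer without a size constraint relative to its own mode, the sign-change formulation via $(L_1)$ is the robust route.

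The main obstacle will be the left-loadedness verification rather than the right-skewness, because right-skewness follows from a direct factorial inequality, whereas establishing the single-sign-change property $(L_1)$ for $\{\alpha_i\}$ requires controlling the \emph{cumulative} tail imbalance of the Poisson distribution around its mean and showing it does not oscillate. If $(L_1)$ proves awkward to pin down cleanly, the fallback is Condition $(L_2)$, i.e.\ showing the partial sums $\sum_{i=1}^k \alpha_i \ge 0$ for all $k \in [m]$; Lemma~\ref{lem:1} already supplies the full sum $\sum_{i=1}^m \alpha_i \ge 0$, so one would interpolate this to all partial sums, again reducing to a monotonicity property of Poisson masses symmetric about the mode. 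Either way the analytic heart is a comparison of $\Prb(X = m-i)$ with $\Prb(X = m+i)$-type quantities, which the Poisson ratio structure makes tractable.
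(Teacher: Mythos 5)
Your decomposition, the reduction to Theorem~\ref{gen_thm}, and the right-skewness half coincide with the paper's own proof, and that half of your argument is essentially complete: the pairing $(s-j)(s+j)=s^2-j^2\le s^2$ applied to $(s+i-1)!/(s-i)! = s\prod_{j=1}^{i-1}(s^2-j^2)$ does give $(s+i-1)!\le s^{2i-1}(s-i)!$, which is the same computation as the paper's Lemma~\ref{lem:poiskew} in a slightly different dress. The genuine gap is in the left-loadedness verification, and the one concrete mechanism you propose for it would fail: the comparison of $\Prb(X=m-i)$ with $\Prb(X=m+i)$ is \emph{not} monotone in $i$. (Note also the increment is $\alpha_i-\alpha_{i+1}=\Prb(X=m-i)-\Prb(X=m+i)$, not $\Prb(X=m+i+1)$.) What is actually true, and what the paper's Lemma~\ref{lem:pois} proves, is that the ratio $\beta_i=\Prb(X=m+i)/\Prb(X=m-i)$ is U-shaped: $\beta_i\ge\beta_{i+1}$ holds if and only if $i^2+i\le m$, so $\beta_i$ first decreases and then increases. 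This yields that the increment sequence changes sign at most once, hence that $\{\alpha_i\}$ is U-shaped --- but a U-shaped sequence can go positive--negative--positive, which is \emph{two} sign changes and would violate Condition $(L_1)$.

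To close the argument you need the two endpoint inequalities that your sketch never produces: $\alpha_1>0$, i.e.\ $\Prb(X\le m-1)>\Prb(X\ge m+1)$ (a Simmons-type inequality, which the paper does not reprove but cites as Lemma~\ref{poi_simmons}), and $\alpha_m\le 0$, i.e.\ $\Prb(X\ge 2m)\ge\Prb(X=0)$ (the paper's Lemma~\ref{poi_sig}, proved via $m^{2m}\ge(2m)!$). The latter is only established for $m\ge 3$, and the statement is in fact false for $m=1$, so the cases $m=\lambda_{k+1}\in\{1,2\}$ require separate handling; the paper checks Condition $(L_2)$ there, using Lemma~\ref{lem:1} alone for $m=1$ and Lemma~\ref{lem:1} together with Lemma~\ref{poi_simmons} for $m=2$. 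Your fallback remark that one could ``interpolate'' Condition $(L_2)$ from Lemma~\ref{lem:1} does not substitute for this: Lemma~\ref{lem:1} controls only the full sum $\sum_{i=1}^m\alpha_i$, and the partial sums do not follow from it without exactly the sign-change structure above. (Incidentally, your aside that right-skewness of $X$ would give $\alpha_i\ge 0$ outright points the wrong way: right-skewness pushes mass to the right of the mode and yields $\Prb(X\le m-i)\le\Prb(X\ge m+i-1)$.) In short: correct skeleton, correct right-skewness, but the analytic core of left-loadedness --- the U-shape of the ratio, the two endpoint lemmas, and the small-$m$ case split --- is missing or misstated.
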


Let us remark that a special case of Theorem~\ref{Kane_thm2}, i.e., when all $\lambda_i$ are equal to a given positive integer $\lambda$, has been obtained  in~\cite[Theorem~2.3]{Kane}, via an analytic approach.  
We break the proof of Theorem~\ref{Kane_thm2} in several lemmata. 

\begin{lemma}
\label{poi_simmons}
Fix a positive integer $m$, and let $X\sim\Poi(m)$.
Then  
\[
\Prb(X\le m-1) > \Prb(X\ge m+1) \, .
\]
\end{lemma}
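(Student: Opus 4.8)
The plan is to convert the asymmetric tail comparison into a single clean integral inequality and to isolate where the real difficulty lies. Writing $p_k=\Prb(X=k)=e^{-m}m^k/k!$, the recursion $p_k/p_{k-1}=m/k$ shows that $\{p_k\}_{k\ge 0}$ increases up to $k=m$ and decreases thereafter, and in particular $p_{m-1}=p_m$, so $\Poi(m)$ has the two modes $m-1$ and $m$. Adding the common term $p_m=\Prb(X=m)$ to both sides, the desired inequality $\Prb(X\le m-1)>\Prb(X\ge m+1)$ is immediately equivalent to the symmetric statement
\[
\Prb(X\le m)>\Prb(X\ge m),
\]
i.e.\ that $\Poi(m)$ puts strictly more mass weakly below its mean than weakly above it.

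Next I would pass to the incomplete-gamma representation of the two tails. Using the identities $\Prb(\Poi(m)\le m)=\frac{1}{m!}\int_m^\infty t^m e^{-t}\,dt$ and $\Prb(\Poi(m)\ge m)=\frac{1}{(m-1)!}\int_0^m t^{m-1}e^{-t}\,dt$, and integrating the second by parts to match the exponents, the symmetric inequality collapses to
\[
\int_m^\infty t^m e^{-t}\,dt-\int_0^m t^m e^{-t}\,dt> m^m e^{-m}.
\]
In words, the excess of the right-hand area of $g(t)=t^m e^{-t}$ over its left-hand area, measured about the mode $t=m$, must exceed the peak value $g(m)=m^m e^{-m}$. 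A natural first move is to symmetrize about the mode by writing the left-hand difference as $\int_0^m\big(g(m+s)-g(m-s)\big)\,ds+\int_{2m}^\infty g(t)\,dt$ and to note the elementary pointwise bound $(1+s/m)^m e^{-s}>(1-s/m)^m e^{s}$ for $0<s<m$, which drops straight out of the power series of $\log\frac{1+x}{1-x}$ and shows that the first integrand is positive.

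The hard part will be that this inequality is close to tight: asymptotically the two sides differ only by a fixed factor, reflecting Ramanujan's constant $\theta_m\to\tfrac13$ in the expansion $\tfrac12=\Prb(\Poi(m)\le m-1)+\theta_m\,p_m$, and indeed our claim is exactly the integer-mean case of the classical mean--median comparison for the Poisson law (Ramanujan's inequality, equivalently the Poisson form of Simmons' inequality). In particular no termwise pairing can work: comparing $p_{m-j}$ with $p_{m+j}$ fails already for moderate $j$ (for instance $p_6>p_0$ when $m=3$), so the tail $\int_{2m}^\infty g$ genuinely contributes and cannot be discarded. The remaining step therefore demands a global estimate, and I would either invoke the classical inequality $\theta_m<\tfrac12$ directly or, for a self-contained treatment, combine the positivity of $g(m+s)-g(m-s)$ with a sharp lower bound on $\int_{2m}^\infty g$ (e.g.\ via a monotonicity-in-$m$ argument) to beat the peak value $g(m)$.
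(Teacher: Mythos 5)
The paper offers no argument for this lemma at all: its ``proof'' is a one-line citation of Proposition~3.3 in Perrin--Redside (a generalization of Simmons' theorem). So the comparison here is between your argument and a bare citation. Your reductions are correct and go well beyond what the paper records: adding $p_m=\Prb(X=m)$ to both sides does turn the claim into $\Prb(X\le m)>\Prb(X\ge m)$; the incomplete-gamma identities and the integration by parts are right; and the pointwise bound $g(m+s)>g(m-s)$ for $g(t)=t^me^{-t}$, $0<s<m$, follows as you say from $\log\frac{1+x}{1-x}>2x$. Most importantly, your identification of the lemma with Ramanujan's problem is exact: from $\tfrac12=\Prb(X\le m-1)+\theta_m p_m$ one gets $\Prb(X\ge m+1)=\tfrac12-(1-\theta_m)p_m$, so the lemma is \emph{equivalent} to $\theta_m<\tfrac12$, which is the classical Szeg\H{o}--Watson solution of Ramanujan's question. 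If you close the argument by invoking that classical result, your proof is complete and is a genuinely different (though structurally parallel) route: both proofs ultimately rest on a citation, but yours cites Ramanujan's inequality rather than a Simmons-type theorem, and the explicit equivalence you exhibit between the two formulations is of independent interest.

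The ``self-contained'' branch you sketch, however, has a genuine gap, and in fact the specific strategy you name cannot work. After symmetrization you must show
\[
\int_0^m\bigl(g(m+s)-g(m-s)\bigr)\,ds\;+\;\int_{2m}^\infty g(t)\,dt\;>\;g(m)\,,
\]
and the left-hand side equals $(2-2\theta_m)\,g(m)$ exactly, so the inequality is true with asymptotic ratio $\tfrac43$. But your plan is to combine mere \emph{positivity} of $g(m+s)-g(m-s)$ with a sharp lower bound on $\int_{2m}^\infty g$; positivity discards the first term, so this requires $\int_{2m}^\infty g>g(m)$, which is false for every $m\ge 2$: by gamma--Poisson duality $\frac{1}{m!}\int_{2m}^\infty t^me^{-t}\,dt=\Prb(\Poi(2m)\le m)$, which is exponentially small in $m$, while $g(m)/m!=p_m\sim(2\pi m)^{-1/2}$; already at $m=2$ one has $\int_4^\infty t^2e^{-t}\,dt=26e^{-4}\approx 0.476<4e^{-2}\approx 0.541=g(2)$. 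So the tail term can never carry the burden; what is needed is a \emph{quantitative} lower bound on the symmetrized integral itself, e.g.\ by integrating $g(m+s)-g(m-s)\ge g(m-s)\bigl(e^{2s^3/(3m^2)}-1\bigr)$ against an explicit lower bound for $g(m-s)$, with small $m$ handled separately --- and no such estimate is supplied. In short: reduction plus citation of Szeg\H{o}--Watson is a correct and complete proof; the from-scratch alternative, as described, is not.
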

\begin{proof}
See~\cite[Proposition~3.3]{Perrin_Redside}. 
\end{proof}

\begin{lemma}\label{lem:poiskew}
Fix a positive integer $s$, and let $S\sim\Poi(s)$. 
Then $S$ is right-skewed.  
\end{lemma}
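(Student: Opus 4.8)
The plan is to verify the two requirements of Definition~\ref{Rskew} directly from the Poisson probability mass function $\Prb(S=k)=e^{-s}s^k/k!$. First I would establish unimodality and pin down the mode. The ratio of consecutive probabilities is $\Prb(S=k)/\Prb(S=k-1)=s/k$, which is $\ge 1$ precisely when $k\le s$; hence $\Prb(S=k)$ is non-decreasing for $k\le s$ and non-increasing for $k\ge s$, so $S$ is unimodal. Since $s/s=1$, the value $s$ (together with $s-1$) attains the maximum, so $s$ is a legitimate mode, and Definition~\ref{Rskew} applies with this choice.

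The core of the argument is the right-skewness inequality $\Prb(S=s-i)\le \Prb(S=s+i-1)$ for every $i\in[s]$. Substituting the mass function and cancelling the common factor $e^{-s}$, this is equivalent to $s^{s-i}/(s-i)! \le s^{s+i-1}/(s+i-1)!$. Cross-multiplying (all quantities are positive) and simplifying the powers of $s$, the inequality reduces to the purely combinatorial statement
\[
\frac{(s+i-1)!}{(s-i)!}\,\le\, s^{2i-1}, \qquad i\in[s].
\]

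To finish I would exploit the symmetry of the left-hand side about $s$. The ratio $(s+i-1)!/(s-i)!$ is the product of the $2i-1$ consecutive integers from $s-i+1$ to $s+i-1$; grouping them in pairs equidistant from $s$ gives
\[
\frac{(s+i-1)!}{(s-i)!}=s\prod_{j=1}^{i-1}(s-j)(s+j)=s\prod_{j=1}^{i-1}(s^2-j^2),
\]
while $s^{2i-1}=s\prod_{j=1}^{i-1}s^2$. Comparing the two products factor by factor, each term satisfies $s^2-j^2\le s^2$, which yields the desired inequality; note that the extreme case $i=s$ is included and forces $s-i=0$, handled correctly by the convention $0!=1$. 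I do not anticipate a genuine obstacle here: the only real step is recognizing the symmetric-product factorization, after which the bound is immediate and term-by-term. An alternative, should that factorization be judged too terse, is to prove the reduced inequality by induction on $i$, where the inductive step multiplies by $(s-i)(s+i)\le s^2$; but the direct product comparison seems cleanest.
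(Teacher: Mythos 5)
Your proof is correct and takes essentially the same route as the paper: the paper sets $\beta_i=\Prb(S=s-i)/\Prb(S=s+i-1)$, notes $\beta_1\le 1$, and shows $\beta_i\ge\beta_{i+1}$ via $s^2\ge s^2-i^2$, which is exactly your factor-by-factor comparison $s^2-j^2\le s^2$ (indeed, your suggested inductive variant is literally the paper's argument). Your explicit check of unimodality and of the mode being $s$ is slightly more careful than the paper, which simply asserts it, but the core computation is identical.
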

\begin{proof}
Since $s$ is a positive integer it follows that the mode of $S$ is equal to $s$. 
For $i\in [s]$, let 
$\beta_i = \frac{\Prb(S=s-i)}{\Prb(S=s+i-1)}$. 
Since the mode of $S$ is equal to $s$, it follows that $\beta_1\le 1$. Now note that $\beta_i\ge \beta_{i+1}$ is equivalent to $s^2 \ge s^2 - i^2$, 
which is clearly correct for each $i\in [s]$. Hence $\{\beta_i\}_{i=1}^s$ 
is non-increasing, and the fact that $\beta_1\le 1$ finishes the proof. 
\end{proof}

The following result is presumably reported somewhere in the literature but, lacking a reference, we include a proof for the sake of completeness. 

\begin{lemma}\label{poi_sig}
Fix a positive integer $m\ge 3$, and let $X\sim\Poi(m)$. Then it holds 
\[
\Prb(X\ge 2m) \ge \Prb(X=0) \, . 
\]
\end{lemma}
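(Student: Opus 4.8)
The plan is to reduce the claimed inequality to a statement about a single term of the Poisson mass function. Since $\Prb(X=0) = e^{-m}$ and, keeping only the summand at $k=2m$,
\[
\Prb(X\ge 2m)\;\ge\; \Prb(X=2m) \;=\; e^{-m}\,\frac{m^{2m}}{(2m)!}\,,
\]
the lemma will follow once I show that $a_m := \frac{m^{2m}}{(2m)!} \ge 1$ for every integer $m\ge 3$. Discarding all but the term at $k=2m$ is legitimate because every summand of $\Prb(X\ge 2m)$ is non-negative.

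First I would verify the base case directly: $a_3 = \frac{3^6}{6!} = \frac{729}{720} > 1$. Next I would show that the sequence $\{a_m\}$ is increasing for $m\ge 1$. A short computation of the ratio of consecutive terms gives
\[
\frac{a_{m+1}}{a_m} = \frac{(m+1)^{2m+2}}{(2m+2)(2m+1)\,m^{2m}} = \frac{(m+1)\,(1+1/m)^{2m}}{2(2m+1)}\,.
\]
I then bound $(1+1/m)^{2m} = \big[(1+1/m)^m\big]^2 \ge 4$, using the elementary fact that $(1+1/m)^m$ is increasing in $m$ with value $2$ at $m=1$, hence $(1+1/m)^m \ge 2$ for all $m\ge 1$. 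Substituting yields $\frac{a_{m+1}}{a_m} \ge \frac{2(m+1)}{2m+1} > 1$, so $\{a_m\}$ is strictly increasing.

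Combining the two steps gives $a_m \ge a_3 > 1$ for all $m\ge 3$, whence $\Prb(X\ge 2m) \ge e^{-m} a_m > e^{-m} = \Prb(X=0)$, as desired. I expect no serious obstacle; the only mildly delicate point is the bound $(1+1/m)^{2m}\ge 4$, which is standard. It is worth noting that the hypothesis $m\ge 3$ is genuinely needed for this single-term reduction, since $a_1 = \tfrac12$ and $a_2 = \tfrac23$ both fall below $1$, so one would have to retain several summands to handle the small cases.
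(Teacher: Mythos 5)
Your proof is correct. Both you and the paper open with the identical reduction: discard all terms of $\Prb(X\ge 2m)$ except $k=2m$, so that the lemma becomes the purely arithmetic inequality $m^{2m}\ge (2m)!$, and both handle $m=3$ by the direct computation $3^6=729\ge 720=6!$. Where you diverge is in proving this inequality for $m\ge 4$: the paper argues non-inductively, pairing the factors of $(2m)!$ as $i\cdot(2m-i)$ for $i=1,\dots,m-1$ together with $m\cdot 2m$, and bounding each pair by $m^2$ (the maximum of $x(2m-x)$), with the slight wrinkle that the pair $1\cdot(2m-1)$ must be grouped with the factor $2$ to get $2(2m-1)\le m^2$ for $m\ge 4$. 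You instead run an induction, showing the ratio
\[
\frac{a_{m+1}}{a_m}=\frac{(m+1)\,(1+1/m)^{2m}}{2(2m+1)}\ \ge\ \frac{2(m+1)}{2m+1}>1
\]
via the classical bound $(1+1/m)^m\ge 2$. Both arguments are elementary and of comparable length; the paper's is fully self-contained (no appeal to monotonicity of $(1+1/m)^m$), while yours trades that for cleaner bookkeeping and, as a bonus, shows the sequence $m^{2m}/(2m)!$ is strictly increasing, which makes transparent exactly why $m\ge 3$ is the right threshold for the single-term reduction (your observation that $a_1=\tfrac12$ and $a_2=\tfrac23$ fail).
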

\begin{proof}
It is enough to show that 
$\Prb(X=2m) \ge \Prb(X=0)$ or, equivalently, that $m^{2m} \ge (2m)!\,$. The inequality is clearly correct when $m=3$, so we may suppose that $m\ge 4$. Now consider the function $f(x)=x(2m-x)$, for $x\in (0,2m)$, and notice that is attains its maximum at $x=m$. Hence it holds 
$f(i) \le m^2$. Moreover, when $m\ge 4$ it also holds $2\cdot (2m-1)\le m^2$. Putting those observations together, we deduce 
\[
(2m)! =m^2\cdot 2(2m-1)\cdot \prod_{i=2}^{m-1} \big(i\cdot (2m-i)\big) \le m^2 \cdot m^2\cdot (m^2)^{m-2} = m^{2m} \, ,
\]
as desired. 
\end{proof}

A sequence of real numbers $\{a_i\}_{i=1}^m$, is said to be  \emph{U-shaped} if there exists $\ell\in [m]$ such that $a_1\ge \cdots \ge a_{\ell}$ and $a_{\ell} \le \cdots \le a_m$. 

\begin{lemma}\label{lem:pois}
Let $m\ge 3$ be an integer, and let  $X\sim\Poi(m)$. Then $X$ is left-loaded.  
\end{lemma}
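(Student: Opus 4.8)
The plan is to show that $X\sim\Poi(m)$, for $m\ge 3$, satisfies Condition $(L_1)$ of Definition~\ref{Scond}, namely that the sequence $\{\alpha_i\}_{i=1}^m$, where $\alpha_i=\Prb(X\le m-i)-\Prb(X\ge m+i)$, changes sign at most once, going from non-negative to non-positive. I expect $(L_1)$ to be the more natural route than $(L_2)$, since the Poisson distribution has a clean unimodal shape and its point masses $p_j:=\Prb(X=j)$ satisfy the explicit ratio recursion $p_{j+1}/p_j=m/(j+1)$, which should let me control the monotone behaviour of $\alpha_i$ directly.

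First I would reformulate $\alpha_i$ in terms of point masses. Writing $\alpha_i=\sum_{j\ge 0}\big(\Prb(X=m-i-j)-\Prb(X=m+i+j)\big)$, or more usefully examining the consecutive difference $\alpha_i-\alpha_{i+1}=\Prb(X=m-i)+\Prb(X=m+i)\cdot(\text{sign adjustment})$, I would aim to express $\alpha_i-\alpha_{i+1}$ as a single signed combination of the two point masses $p_{m-i}=\Prb(X=m-i)$ and $p_{m+i}=\Prb(X=m+i)$. Indeed, $\alpha_i-\alpha_{i+1}=\Prb(X=m-i)-(-\Prb(X=m+i))=p_{m-i}+p_{m+i}$ after carefully accounting for which masses enter each tail; the precise bookkeeping is routine but must be done to pin the sign. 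Because $p_{m-i}$ and $p_{m+i}$ are both non-negative, this would make $\{\alpha_i\}$ monotone in $i$ and hence automatically sign-changing at most once, which is stronger than needed. If the differences are not all of one sign, the fallback is to compare $p_{m-i}$ with $p_{m+i}$ directly: the ratio $p_{m+i}/p_{m-i}$ is an explicit product of factors of the form $m/(m-i+t)$ whose monotonicity in $i$ I can read off from the recursion, again yielding a single sign change.

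Having established the single-sign-change shape of $\{\alpha_i\}$, I would then verify the boundary behaviour that fixes the \emph{direction} of the change, namely that $\alpha_1\ge 0$ and that the partial sum condition at the top end is consistent. The value $\alpha_1=\Prb(X\le m-1)-\Prb(X\ge m+1)$ is exactly the quantity shown to be strictly positive in Lemma~\ref{poi_simmons} (Simmons' inequality for Poisson), so $\alpha_1>0$ and the sequence starts non-negative. For the extreme term, $\alpha_m=\Prb(X=0)-\Prb(X\ge 2m)\le 0$ precisely by Lemma~\ref{poi_sig}, which guarantees that the sequence does eventually reach the non-positive regime. Together with the at-most-one-sign-change structure, these two endpoint facts force the pattern required by $(L_1)$: non-negative for $i\le\ell$ and non-positive for $i>\ell$ for some $\ell\in[m]$.

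The main obstacle I anticipate is the sign bookkeeping in the first step: care is needed with the exact index ranges so that the telescoping of tail probabilities into individual point masses is correct, since an off-by-one error would flip the monotonicity conclusion. If the clean monotonicity of $\{\alpha_i\}$ fails to hold for small $i$ (where the Poisson mass is not yet concentrated symmetrically around $m$), the remedy is to fall back on the pointwise ratio comparison $p_{m+i}\le p_{m-i}$ combined with the fact that such ratio sequences generated by the Poisson recursion are themselves monotone, which still delivers a single crossing. Once the shape and the two endpoint signs from Lemmas~\ref{poi_simmons} and~\ref{poi_sig} are in hand, Condition $(L_1)$ follows immediately and the lemma is proved.
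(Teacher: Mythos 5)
Your overall skeleton matches the paper's proof: reduce the behaviour of $\{\alpha_i\}$ to that of the difference sequence $a_i := \Prb(X=m-i)-\Prb(X=m+i)$, get $\alpha_1>0$ from Lemma~\ref{poi_simmons} and $\alpha_m\le 0$ from Lemma~\ref{poi_sig}, and conclude Condition $(L_1)$ from a single-sign-change property of $\{a_i\}$. However, both routes you offer for the middle (and crucial) step rest on false claims. The bookkeeping in your primary route is wrong: writing $p_j=\Prb(X=j)$, passing from $\alpha_i$ to $\alpha_{i+1}$ removes one point mass from the left tail and adds one to the right tail, so
\begin{equation*}
\alpha_i - \alpha_{i+1} \,=\, \Prb(X=m-i) - \Prb(X=m+i) \,=\, p_{m-i}-p_{m+i}\,,
\end{equation*}
not $p_{m-i}+p_{m+i}$. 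Hence $\{\alpha_i\}$ is not automatically non-increasing, and in fact it is not non-increasing at all: Lemma~\ref{poi_sig} gives $p_{2m}\ge p_0$ for $m\ge 3$, i.e.\ $a_m\le 0$, while $a_1=p_{m-1}-p_{m+1}>0$, so the differences genuinely change sign.

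Your fallback claim, that the ratio $\beta_i := p_{m+i}/p_{m-i}$ is monotone in $i$, is also false. A direct computation gives
\begin{equation*}
\frac{\beta_{i+1}}{\beta_i} \,=\, \frac{m^2}{(m-i)(m+i+1)} \,=\, \frac{m^2}{m^2+m-i^2-i}\,,
\end{equation*}
which is at most $1$ precisely when $i^2+i\le m$; thus $\{\beta_i\}$ first decreases (for $i$ up to roughly $\sqrt{m}$) and then increases, i.e.\ it is U-shaped, not monotone. This is exactly where the paper's proof does its real work: it establishes this U-shape, notes that $\beta_1<1$ and that $\beta_m\ge 1$ (the latter from the proof of Lemma~\ref{poi_sig}), and only then concludes that $\beta_i$ crosses $1$ exactly once, so that $\{a_i\}$ changes sign once, $\{\alpha_i\}$ is U-shaped, and the endpoint signs force the $(L_1)$ pattern. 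Your proposal never performs this computation, and the monotonicity you propose to ``read off from the recursion'' does not hold, so as written the argument has a genuine gap at its central step.
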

\begin{proof}
We show that $X$ satisfies Condition $(L_1)$. 
Recall that $\alpha_i=\Prb(X\le m-i)-\Prb(X\ge m+i)$. We have to show that $\{\alpha_i\}_{i=1}^{m}$ changes sign once. 
Lemma~\ref{poi_simmons} implies that $\alpha_1 >0$ and Lemma~\ref{poi_sig} implies that $\alpha_m\le 0$, and it thus enough to show that the sequence $\{\alpha_i\}_{i=1}^{m}$ is U-shaped. 
Since, for every $i\in [m-1]$, it holds 
\begin{eqnarray*}
\alpha_{i+1} &=& \alpha_{i} - \Prb(X=m-i) + \Prb(X=m+i)  \, ,
\end{eqnarray*} 
it follows that it is enough to show that the 
sequence $\{a_i\}_{i=1}^{m}$, where $a_i:= \Prb(X=m-i) - \Prb(X=m+i)$, changes sign once. 
To this end, 
for $i\in [m]$, let $\beta_i = \frac{\Prb(X=m+i)}{\Prb(X=m-i)}$. 
Then $\beta_i\ge \beta_{i+1}$ is equivalent to $i^2+i\le m$. Since the sequence 
$\{i^2 + i\}_{i=1}^m$ is increasing, 
it follows that the sequence $\{\beta_i\}_{i=1}^{m}$ is U-shaped. 
Now note that $\beta_1 < 1$, and that the proof of  Lemma~\ref{poi_sig} implies that $\beta_m\ge 1$
Since $\{\beta_i\}_{i=1}^{m}$ is a U-shaped sequence, 
it follows that there exists a unique  
$k\in [m]$ such that 
$\beta_i< 1$, for $i\le k$, and 
$\beta_i \ge 1$, for $i\ge k+1$, which in turn yields that $a_i>0$ for
$i\le k$, and 
$a_i \le 0$, for $i\ge k+1$. 
In other words, the sequence $\{a_i\}_{i=1}^{m}$ changes sign once, as desired. 
\end{proof}

The proof of Theorem~\ref{Kane_thm2} is almost complete. 

\begin{proof}[Proof of Theorem~\ref{Kane_thm2}] 
Let $S\sim \sum_{i=1}^{k}X_i$ and $X\sim X_{k+1}$, and note that $S\sim \Poi(\sum_{i=1}^{k}\lambda_i)$. Set 
$s = \sum_{i=1}^{k}\lambda_i$ and $m=\lambda_{k+1}$. Then $S\sim \Poi(s), X\sim\Poi(m)$ and $s\ge m$, and we proceed with verifying the two conditions in Theorem~\ref{gen_thm}. The mode of $S$ is equal to $s$, and Lemma~\ref{lem:poiskew} implies that $S$ is right-skewed; hence it only remains to verify that $X$ is left-loaded. 
If $m=1$, then the second statement in Lemma~\ref{lem:1} implies that $X$ satisfies Condition $(L_2)$. If $m=2$, then Lemma~\ref{poi_simmons} and the second statement 
in Lemma~\ref{lem:1} imply that $X$ satisfies    
Condition $(L_2)$. If $m\ge 3$ then Lemma~\ref{lem:pois} implies that $X$ satisfies   Condition $(L_1)$. 
The result follows. 
\end{proof}

\section{Concluding remarks}\label{sec:last}

Notice that in the proof of Theorem~\ref{Kane_thm2}
we embarked on the approach of showing that 
the random variable $X\sim\Poi(m)$ satisfies 
Condition $(L_2)$, when $m\in\{1,2\}$, and Condition $(L_1)$, when $m\ge 3$. 
Similar ideas can be applied to sums of 
independent binomial random variables. 
In the case of binomial random variables, 
the following generalisation of~\eqref{CB_ineq} holds true.

\begin{theorem}[Jogdeo -- Samuels~\cite{Jogdeo_Samuels}]
\label{Jog_Sam}
Fix positive integers $n\ge m$, and let $\{X_i\}_{i\ge 1}$ be independent random variables such that   $X_i\sim\Bin(n, \frac{m}{n})$.  
Then it holds 
\begin{equation}\label{jog.sam.ineq}
\Prb\left( \sum_{i=1}^kX_i\ge km \right) \ge \Prb\left( \sum_{i=1}^{k+1}X_i\ge (k+1)m \right), \, \text{ for } \, k\ge 1 \, .
\end{equation}
\end{theorem}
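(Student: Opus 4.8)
The plan is to follow the strategy of Theorem~\ref{Kane_thm2} verbatim, replacing the Poisson inputs by binomial ones. Put $S=\sum_{i=1}^kX_i$ and $X=X_{k+1}$; since a sum of independent $\Bin(n,\frac{m}{n})$ variables is again binomial, $S\sim\Bin(kn,\frac{m}{n})$ with $\Exp(S)=km$, while $X\sim\Bin(n,\frac{m}{n})$ with $\Exp(X)=m$. Setting $s=km$ and noting $s\ge m$ for $k\ge 1$, the conclusion $\Prb(S\ge s)\ge\Prb(S+X\ge s+m)$ delivered by Theorem~\ref{gen_thm} is precisely~\eqref{jog.sam.ineq}. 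Thus the task reduces to verifying the two hypotheses of Theorem~\ref{gen_thm}: that $S$ is right-skewed with mode $s$, and that $X$ is left-loaded with mean $m$.

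For right-skewness I would imitate Lemma~\ref{lem:poiskew}. Writing $N=kn$, $p=\frac{m}{n}$, $q=1-p$, and $\beta_i=\frac{\Prb(S=s-i)}{\Prb(S=s+i-1)}$, the mode being $s$ gives $\beta_1\le 1$, and the binomial ratio $\frac{\Prb(S=j+1)}{\Prb(S=j)}=\frac{N-j}{j+1}\cdot\frac{p}{q}$ lets one compute
\[
\frac{\beta_i}{\beta_{i+1}}=\frac{(N-s+1)^2-i^2}{s^2-i^2}\cdot\frac{s^2}{(N-s)^2}\, ,
\]
using $\frac{p}{q}=\frac{s}{N-s}$. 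Hence $\frac{\beta_i}{\beta_{i+1}}\ge 1$ is equivalent to $s^2\big(2(N-s)+1\big)\ge i^2(2s-N)N$, which holds for every admissible $i$ exactly when $2s\le N$, that is when $n\ge 2m$ (equivalently $p\le\frac{1}{2}$). In that regime $\{\beta_i\}$ is non-increasing and, together with $\beta_1\le 1$, yields right-skewness. The left-loadedness of $X\sim\Bin(n,\frac{m}{n})$ I would establish in parallel with Lemma~\ref{lem:pois}: prove the binomial analogue of Simmons' inequality, $\Prb(X\le m-1)>\Prb(X\ge m+1)$, to get $\alpha_1>0$; prove a crude tail estimate giving $\alpha_m\le 0$; and then show the sequence $a_i:=\Prb(X=m-i)-\Prb(X=m+i)$ changes sign once by checking that $\frac{\Prb(X=m+i)}{\Prb(X=m-i)}$ crosses the value $1$ exactly once, so that $X$ satisfies Condition~$(L_1)$. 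For the residual small values of $m$ one falls back, as in the proof of Theorem~\ref{Kane_thm2}, on Condition~$(L_2)$ via the nonnegativity in Lemma~\ref{lem:1}.

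The main obstacle is the range $m\le n<2m$, i.e. $p>\frac{1}{2}$. Here the binomial $S$ is left-skewed and the hypothesis of Theorem~\ref{gen_thm} genuinely breaks down: for $k$ large enough the index $s+i-1$ exceeds the support bound $N=kn$ for some $i\in[s]$, so that $\Prb(S=s+i-1)=0<\Prb(S=s-i)$, directly violating the defining inequality of Definition~\ref{Rskew}. Consequently the route through Theorem~\ref{gen_thm} cannot reach~\eqref{jog.sam.ineq} when $p>\frac{1}{2}$, and this is exactly the gap I expect to be bridged by a separate (conjectural) argument rather than by the skewness machinery. In summary, the method cleanly reproves Jogdeo--Samuels for $n\ge 2m$ and isolates the case $n<2m$ as the genuinely harder one.
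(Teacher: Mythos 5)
Your proposal does not prove the theorem as stated, and the gaps are genuine rather than cosmetic. First, the statement covers all $n\ge m$, while you concede that the route through Theorem~\ref{gen_thm} cannot reach the regime $m\le n<2m$ (your observation that right-skewness of $S$ fails there, because $\Prb(S=s+i-1)=0<\Prb(S=s-i)$ once $s+i-1$ exceeds the support bound $kn$, is correct). So at best you obtain a special case. This is in fact how the paper itself treats Theorem~\ref{Jog_Sam}: it is quoted from Jogdeo--Samuels, with an analytic proof attributed to Kane, and the authors only \emph{remark} that Theorem~\ref{gen_thm} recovers it in a restricted range --- they never claim their machinery proves the full statement.

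The more serious problem is inside the range you do claim. Your right-skewness computation for $S\sim\Bin(kn,\frac{m}{n})$ when $n\ge 2m$ is correct and matches the paper's remark, but the left-loadedness of $X\sim\Bin(n,\frac{m}{n})$ is only sketched (``prove a crude tail estimate giving $\alpha_m\le 0$'', ``show the sequence $a_i$ changes sign once''), and this is exactly where the difficulty sits. The authors, carrying out precisely the program you describe (mimicking Lemmata~\ref{poi_sig} and~\ref{lem:pois}), could only establish Condition $(L_1)$ for $4\le m\le \frac{n}{3}$, i.e.\ $n\ge 3m$; the window $2m<n<3m$, together with $m=3$, is what they could \emph{not} handle, and extending the method to all $n>2m$ is the content of their Conjecture~\ref{conj:1}, which they explicitly state they are unable to verify. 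Your fallback to Condition $(L_2)$ for ``residual small values of $m$'' also covers less than you suggest: it works for $m\in\{1,2\}$, but already for $m=3$ Condition $(L_2)$ requires $\alpha_1+\alpha_2\ge 0$, which follows neither from the Simmons-type inequality $\alpha_1>0$ nor from the full-sum nonnegativity in Lemma~\ref{lem:1}. So the assertion that the method ``cleanly reproves Jogdeo--Samuels for $n\ge 2m$'' overclaims; what your outline actually delivers, once the sketched steps are filled in along the paper's lines, is the same restricted range $m\in\{1,2\}$ or $4\le m\le\frac{n}{3}$ that the paper reports, not all of $n\ge 2m$, and certainly not the theorem as stated.
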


An analytic proof of Theorem~\ref{Jog_Sam} can be found in~\cite[Theorem~2.1]{Kane}. Let us remark that 
Theorem~\ref{gen_thm} yields a proof of a special case of Theorem~\ref{Jog_Sam}. 
Indeed, all lemmata in Section~\ref{sec:4} 
have a corresponding analogue for binomial 
random variables. 
It is easy to see that $S\sim\Bin(nk,\frac{m}{n})$ is left-skewed, when $n\ge 2m$. Moreover, 
$X\sim\Bin(n,\frac{m}{n})$ satisfies Condition $(L_2)$, when $m\in \{1,2\}$, and, using 
arguments similar to the ones in Lemmata~\ref{poi_sig} and~\ref{lem:pois}, 
it can be shown that $X$ satisfies Condition $(S_1)$, when $4\le m\le \frac{n}{3}$. 
In other words, Theorem~\ref{gen_thm} yields 
a proof of~\eqref{jog.sam.ineq} when $m\in\{1,2\}$ or 
$4\le m\le \frac{n}{3}$. 
Clearly, this range of the parameters $n$ and $m$   is 
rather small when compared to the corresponding range in Theorem~\ref{Jog_Sam}, which is valid for $n\ge m$. An application of Theorem~\ref{gen_thm} could increase this range to $n>2m$, provided  $X$ satisfies Condition $(S_2)$. We are unable to verify this 
condition, but we believe that it holds true. 

\begin{conjecture}\label{conj:1}
Fix positive integers $n,m$ such that $n> 2m$, and 
let  $X\sim\Bin(n,\frac{m}{n})$. Then $X$ satisfies   Condition $(S_2)$.
\end{conjecture}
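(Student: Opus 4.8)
The plan is to establish the \emph{stronger} assertion that $X\sim\Bin(n,\tfrac{m}{n})$ satisfies Condition $(L_1)$, since this immediately yields Condition $(S_2)$. Indeed, if $\{\alpha_i\}_{i=1}^m$ is non-negative for $i\le\ell$ and non-positive for $i>\ell$, then the partial sums $\Sigma_k:=\sum_{i=1}^k\alpha_i$ are non-decreasing on $\{1,\dots,\ell\}$ and non-increasing on $\{\ell,\dots,m\}$; hence $\min_{k\in[m]}\Sigma_k=\min(\Sigma_1,\Sigma_m)=\min(\alpha_1,\Sigma_m)$, which is non-negative because $\alpha_1\ge 0$ and, by the second statement of Lemma~\ref{lem:1}, $\Sigma_m\ge 0$. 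So everything reduces to showing that $\{\alpha_i\}_{i=1}^m$ changes sign at most once, from $+$ to $-$.

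I would analyse this sign pattern exactly as in Lemma~\ref{lem:pois}. Since $\alpha_{i+1}-\alpha_i=\Prb(X=m+i)-\Prb(X=m-i)$ and $\Prb(X=m-i)>0$ for every $i\in[m]$ (as $0\le m-i<m+i\le 2m\le n$), the monotonicity of $\{\alpha_i\}$ is governed by the ratio $\beta_i:=\frac{\Prb(X=m+i)}{\Prb(X=m-i)}$. A direct computation with binomial ratios gives
\[
\frac{\beta_{i+1}}{\beta_i}=\frac{m^2}{(n-m)^2}\cdot\frac{(n-m-i)(n-m+i+1)}{(m+i+1)(m-i)},
\]
which exceeds $1$ precisely when $i(i+1)>\frac{m(n-m)}{n}$; thus $\{\beta_i\}_{i=1}^m$ is U-shaped, decreasing and then increasing. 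Because $n>2m$ forces $\beta_1=\frac{(n-m+1)m}{(m+1)(n-m)}<1$, the sequence starts below $1$, and any value $\ge 1$ can occur only on the increasing branch. Hence there is a first index $i^\ast\in\{1,\dots,m+1\}$ with $\beta_{i^\ast}\ge 1$, and $\beta_i<1$ for $i<i^\ast$ while $\beta_i\ge 1$ for $i^\ast\le i\le m$. Consequently $\{\alpha_i\}_{i=1}^m$ is itself U-shaped, decreasing on $\{1,\dots,i^\ast\}$ and increasing on $\{i^\ast,\dots,m\}$, with $\alpha_1>0$.

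The decisive step---and the point at which the approach of Section~\ref{sec:4} stalls for $m>\frac{n}{3}$---is to conclude \emph{without} appealing to a binomial analogue of Lemma~\ref{poi_sig}. My plan is to split on the index $i^\ast$. If $i^\ast>m$, then $\beta_i<1$ throughout $[m]$, so $\{\alpha_i\}$ is non-increasing and, being positive at $i=1$, changes sign at most once. If $i^\ast\le m$, then the U-shape gives $\beta_m\ge\beta_{i^\ast}\ge 1$, i.e.\ $\Prb(X=2m)\ge\Prb(X=0)$, whence
\[
\alpha_m=\Prb(X=0)-\Prb(X\ge 2m)\le\Prb(X=0)-\Prb(X=2m)\le 0;
\]
together with the U-shape and $\alpha_1>0$ this again forces a single sign change from $+$ to $-$. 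In both cases $(L_1)$ holds. The gain is conceptual: the inequality $\Prb(X=2m)\ge\Prb(X=0)$, whose direct verification in the regime $2m<n<3m$ is exactly what I cannot carry out, is invoked only in the case where the valley structure makes it automatic.

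The two external inputs I would still have to secure are the binomial Simmons inequality $\alpha_1>0$ (valid for any binomial with integer mean, and presumably citable) and a careful justification of the U-shape and single upward crossing of $\{\beta_i\}$ at the endpoints $i=1$ and $i=m$. I expect the main obstacle to lie here rather than in the algebra: establishing $\alpha_1>0$ and pinning down the valley/crossing structure rigorously for every admissible pair $(n,m)$ with $n>2m$, including the small cases $m\in\{1,2,3\}$ that Section~\ref{sec:4} treats separately.
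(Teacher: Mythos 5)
There is no paper proof to compare against here: the statement is left as Conjecture~\ref{conj:1}, and the authors say explicitly that they are ``unable to verify this condition.'' The paper's own partial progress (sketched in Section~\ref{sec:last}) establishes Condition $(S_1)=(L_1)$ only for $4\le m\le \frac{n}{3}$, because it runs the argument of Lemmata~\ref{poi_sig} and~\ref{lem:pois} and therefore needs the unconditional endpoint inequality $\Prb(X=2m)\ge \Prb(X=0)$. So your proposal has to be judged on its own merits, and having checked it step by step, I believe it is correct and in fact resolves the conjecture in the stronger form that $X$ satisfies $(L_1)$. The reduction $(L_1)\Rightarrow(L_2)$ is valid (under $(L_1)$ the partial sums are non-decreasing then non-increasing, so their minimum is $\min(\alpha_1,\Sigma_m)\ge 0$ by Lemma~\ref{lem:1}); the ratio algebra is right (with $q=n-m$ one gets $\beta_{i+1}>\beta_i$ iff $(i^2+i)(q+m)>mq$, i.e.\ $i(i+1)>\frac{m(n-m)}{n}$, and $\beta_1<1$ is exactly equivalent to $n>2m$); and the dichotomy is sound: if no $\beta_i$ reaches $1$, then $\{\alpha_i\}$ is strictly decreasing from $\alpha_1>0$ and so changes sign at most once, while if $\beta_{i^\ast}\ge 1$ for some $i^\ast$, then $i^\ast$ must lie on the increasing branch (every term on the decreasing branch is below $\beta_1<1$), so $\beta_i\ge 1$ for all $i\ge i^\ast$, in particular $\beta_m\ge1$, giving $\alpha_m\le\Prb(X=0)-\Prb(X=2m)\le 0$, and the valley-shaped $\{\alpha_i\}$ with positive start and non-positive tail changes sign once.

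The case split is the genuine advance over the paper's route, and it is not cosmetic: the inequality $\Prb(X=2m)\ge\Prb(X=0)$ actually fails in part of the regime $2m<n<3m$ (for $n=5$, $m=2$ one has $\binom{5}{4}\left(\frac{2}{3}\right)^4=\frac{80}{81}<1$), so no argument that uses it unconditionally can cover the conjectured range; your dichotomy invokes it only in the case where the U-shape of $\{\beta_i\}$ makes it automatic, and in the failing example above one is precisely in your Case 1. Three small points for the write-up. First, after the valley the increments satisfy $\beta_i\ge 1$, not $\beta_i>1$, so say ``non-decreasing'' rather than ``increasing'' for $\{\alpha_i\}$ on $\{i^\ast,\dots,m\}$. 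Second, your closing hedges are overcautious: the U-shape is fully justified by the computation you already did, the input $\alpha_1>0$ for $\frac{m}{n}<\frac12$ is exactly Simmons' classical theorem, which the paper itself cites (indeed it frames Conjecture~\ref{conj:1} as a refinement of Simmons), and the small cases need no separate treatment --- for $m=1$, $\alpha_1\ge 0$ is already the second statement of Lemma~\ref{lem:1}. Third, note that your argument also enlarges the range in which Theorem~\ref{gen_thm} reproves the Jogdeo--Samuels inequality, which is what the conjecture was formulated for.
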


Let us remark that Conjecture~\ref{conj:1}, if true, may be seen as a refinement of an old result due to Simmons (see~\cite{Perrin_Redside, Simmons, Hoogstrate}).

Similarly, the following statement, provided correct, combined with Theorem~\ref{gen_thm} may be employed to establish yet another proof of Theorem~\ref{Kane_thm2}. 

\begin{conjecture}
Fix a positive integer $m$, and let $X\sim\Poi(m)$. 
Then $X$ satisfies 
Condition $(S_2)$. 
\end{conjecture}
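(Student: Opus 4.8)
The plan is to read Condition $(S_2)$ as the partial-sum condition $(L_2)$ of Definition~\ref{Scond}, so that the goal becomes proving $\Sigma_k:=\sum_{i=1}^{k}\alpha_i\ge 0$ for every $k\in[m]$, where $\alpha_i=\Prb(X\le m-i)-\Prb(X\ge m+i)$ and $X\sim\Poi(m)$. The first step I would record is a purely structural observation that reduces $(L_2)$ to the single-sign-change Condition $(L_1)$: if there is an $\ell$ with $\alpha_i\ge 0$ for $i\le\ell$ and $\alpha_i\le 0$ for $i>\ell$, then for $k\le\ell$ each summand is non-negative, so $\Sigma_k\ge 0$, while for $k>\ell$ one writes $\Sigma_k=\Sigma_m-\sum_{i=k+1}^{m}\alpha_i\ge\Sigma_m$, because the discarded terms are non-positive. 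Since Lemma~\ref{lem:1} guarantees $\Sigma_m\ge 0$, both regimes give $\Sigma_k\ge 0$. In short, $(L_1)\Rightarrow(L_2)$, with Lemma~\ref{lem:1} supplying the only missing ingredient.

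Granting this reduction, the conjecture follows from material already established for the Poisson family. For $m\ge 3$, Lemma~\ref{lem:pois} shows that $\Poi(m)$ satisfies $(L_1)$, hence $(L_2)$. The two remaining cases are immediate: for $m=1$ one has $\Sigma_1=\Sigma_m\ge 0$ by Lemma~\ref{lem:1}; for $m=2$ one has $\Sigma_1=\alpha_1=\Prb(X\le 1)-\Prb(X\ge 3)\ge 0$ by Lemma~\ref{poi_simmons}, together with $\Sigma_2=\Sigma_m\ge 0$ by Lemma~\ref{lem:1}. This would settle Condition $(S_2)$ for every positive integer $m$, and thereby yield the uniform proof of Theorem~\ref{Kane_thm2} alluded to in the remark preceding the conjecture.

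I expect the only genuine obstacle to lie not in the Poisson statement itself but in extracting from it a self-contained argument that also bears on the binomial Conjecture~\ref{conj:1}, where the single-sign-change property is not available over the full range $n>2m$. For such a uniform approach one would bypass $(L_1)$ entirely and attempt to prove $\Sigma_k\ge 0$ directly, for instance by writing $\Sigma_k=\Exp\big[\phi_k(X)\big]$ for the clipped, antisymmetric-about-$m$ weight $\phi_k(x)=\mathrm{sgn}(m-x)\cdot\min(k,|x-m|)$, and by exploiting the log-concavity of the Poisson law (equivalently, the pmf-ratio monotonicity already used in Lemmata~\ref{poi_sig} and~\ref{lem:pois}, where $p_j=\Prb(X=j)$). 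The delicate point there is to control the far right tail $\sum_{t>m}p_{m+t}$, which has no symmetric left-hand partner, against the positive bulk uniformly in $k$. This is precisely the estimate that the sign-change argument finesses in the Poisson case but that must be confronted head-on in the binomial one.
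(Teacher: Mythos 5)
The paper does not prove this statement at all: it is posed as an open conjecture (the name $(S_2)$ is a leftover label for Condition $(L_2)$ of Definition~\ref{Scond}, as that definition's label and the mixed $(L_2)$/$(S_1)$/$(S_2)$ usage in the concluding section indicate). Read with that interpretation, your proposal is correct, and it in fact shows the conjecture already follows from the paper's own lemmata. The one genuinely new ingredient is your structural observation that $(L_1)$ implies $(L_2)$: under a single sign change the partial sums $\Sigma_k$ decrease only after the index $\ell$, so $\min_k \Sigma_k = \min(\Sigma_1,\Sigma_m)$, and both of these are non-negative ($\Sigma_1=\alpha_1\ge 0$ by the sign pattern, $\Sigma_m\ge 0$ by the second statement of Lemma~\ref{lem:1}). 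Combining this with Lemma~\ref{lem:pois} (which gives $(L_1)$ for $m\ge 3$) and with the paper's own verification of $(L_2)$ for $m\in\{1,2\}$ (Lemma~\ref{poi_simmons} plus Lemma~\ref{lem:1}, exactly as in the proof of Theorem~\ref{Kane_thm2}) settles the statement for every positive integer $m$; I checked each step and see no gap. Two caveats. First, everything hinges on reading $(S_2)$ as $(L_2)$; no other candidate condition appears in the paper, so this reading is the only sensible one, but if the authors privately intended something stronger your proof addresses a different statement. Second, because your route to $(L_2)$ for $m\ge 3$ passes through $(L_1)$, the ``yet another proof'' of Theorem~\ref{Kane_thm2} that the conjecture was meant to enable collapses back into the existing proof rather than providing an independent one; your closing paragraph correctly identifies that a direct verification of the partial-sum condition, bypassing the sign-change property, is what would be needed both for genuine independence and for the binomial Conjecture~\ref{conj:1}, where $(L_1)$ is not available over the full range $n>2m$.
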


\textbf{Acknowledgements } 
S.~P. and C.~P. were supported by the Hellenic Foundation for 
Research and Innovation (H.F.R.I.) under the ``First Call for H.F.R.I. 
Research Projects to support Faculty members and 
Researchers and the procurement of high-cost 
research equipment grant" (Project Number: HFRI-FM17-2436).

\end{document}